\subjclass[2010]{13H15, 13D40, 19A49}
\newcounter{derpp}
\newtheorem{thm}{Theorem}[section]
\newtheorem*{thm*}{Theorem}
\newtheorem{cor}[thm]{Corollary}
\newtheorem{prop}[thm]{Proposition}
\newtheorem{lem}[thm]{Lemma}
\newtheorem*{conj*}{Conjecture I}
\newtheorem*{cg}{Cohen-Gabber Theorem}
\theoremstyle{definition}
\newtheorem{defn}[thm]{Definition}
\newtheorem{exmp}[thm]{Example}
\newtheorem{notns}[thm]{Notations}
\theoremstyle{remark}
\newtheorem{rem}[thm]{Remark}
\DeclareMathOperator{\Spec}{Spec}
\DeclareMathOperator{\Supp}{Supp}
\DeclareMathOperator{\ord}{ord}
\DeclareMathOperator{\Ass}{Ass}
\newcommand{\fm}{\mathfrak{m}}
\newcommand{\fp}{\mathfrak{p}}
\newcommand{\fq}{\mathfrak{q}}
\newcommand{\fa}{\mathfrak{a}}
\newcommand{\fN}{\mathfrak{N}}
\newcommand{\ds}{\displaystyle}
\newcommand{\mcm}{\mathcal{M}}
\newcommand{\GG}{\mathbf{G}}
\newcommand{\bx}{\mathbf{x}}
\newcommand{\by}{\mathbf{y}}
\newcommand{\bof}{\mathbf{f}}
\let\c@equation\c@thm
\numberwithin{equation}{section}
\address{Department of Mathematics \\
University of Illinois at Chicago \\
851 S. Morgan St. \\
Chicago, IL 60607}
\email[Chris Skalit]{cskali2@uic.edu}
\title{Koszul Factorization and the Cohen-Gabber Theorem}
\author{C. Skalit}
\begin{document}
\begin{abstract}We present a sharpened version of the Cohen-Gabber theorem for equicharacteristic, complete local domains $(A,\mathfrak{m},k)$ with algebraically closed residue field and dimension $d>0$. Namely, we show that for any prime number $p$, $\operatorname{Spec} A$ admits a dominant, finite map to $\operatorname{Spec} k[[X_1, \cdots, X_d]]$ with generic degree relatively prime to $p$. Our result follows from Gabber's original theorem, elementary Hilbert-Samuel multiplicity theory, and a ``factorization'' of the map induced on the Grothendieck group $\mathbf{G}_0(A)$ by the Koszul complex.\end{abstract}
\maketitle

\section*{Introduction}

When $(A,\fm, k)$ is a complete, equicharacteristic, $d$-dimensional local domain, the familiar Cohen structure theorem says that there exists a power series subring $R = k[[X_1, \cdots, X_d]]$ over which $A$ is finite. When $k$ has characteristic zero, the map on fraction fields is automatically separable, but arranging for this condition in characteristic $p > 0$ requires a theorem of Gabber \cite[IV.2.1.1]{GTG} (see also \cite{KS} for an elementary proof):

\begin{cg}Let $(A,\fm,k)$ be a complete, local ring of characteristic $p > 0$. Suppose that $A$ is reduced and of equidimension $d$. Then there exists a subring $R \cong k[[X_1, \cdots, X_d]]$ such that the map $R \to A$ is finite and generically \'{e}tale. \end{cg}

The theorem is a crucial ingredient in the proof of the existence of so-called $\ell'$-alterations: given a variety $X/k$ and an $\ell \neq \operatorname{char} k$, there exists a regular, connected scheme $Y$, equipped with a proper map $Y \to X$ whose generic degree is finite and prime to $\ell$. We refer the reader to \cite{GTG} for a more precise statement and some generalizations to mixed-characteristic. From the perspective of commutative algebra, the existence of such a generically \'{e}tale Noether normalization simplifies arguments involving tight closures and test elements (see, for example, \cite[(6.3)]{HH90} and \cite[\S 4]{HH00}).

When the residue field is algebraically closed, we show that generic separability may be achieved in an even stronger sense:
\begin{thm*}Let $(A,\fm,k)$ be a complete, equicharacteristic local domain of positive dimension whose residue field is algebraically closed. Then for any prime $p > 0$, there exists a regular subring $R = k[[X_1, \cdots X_d]]$ such that $R \to A$ is finite and $p$ is relatively prime to the generic degree $[K(A):K(R)]$.
\end{thm*}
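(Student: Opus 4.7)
The plan is to use Gabber's theorem to produce an initial Noether normalization and then to modify a single parameter so as to adjust the generic degree modulo $p$, with the modification controlled by an additivity formula for Hilbert--Samuel multiplicities arising from the behaviour of Koszul complexes under factorization of parameters in $\mathbf{G}_0(A)$. By Cohen's structure theorem $A$ contains its residue field $k$, and by Gabber's theorem (vacuous in characteristic zero) there is a finite, generically \'{e}tale embedding $R_0 = k[[y_1,\dots,y_d]] \hookrightarrow A$. Because $k$ is algebraically closed the induced residue-field extension is trivial, so
\[ e_0 := [K(A):K(R_0)] = e\bigl((y_1,\dots,y_d);A\bigr) \]
is just the Hilbert--Samuel multiplicity. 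If $p \nmid e_0$ (in particular if $p = \operatorname{char} k > 0$) take $R = R_0$; otherwise $p \neq \operatorname{char} k$ and $p \mid e_0$, and the task becomes to replace $y_1$ by a better element $f$ while keeping $y_2,\dots,y_d$ fixed.

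The key tool, which I take to be the ``Koszul factorization'' of the abstract, is the additivity of multiplicity in a single slot: whenever $(fg, y_2, \dots, y_d)$ is a system of parameters,
\[ e\bigl((fg, y_2, \dots, y_d); A\bigr) = e\bigl((f, y_2, \dots, y_d); A\bigr) + e\bigl((g, y_2, \dots, y_d); A\bigr), \]
obtained by comparing the Koszul complexes $K_\bullet(f;A)$, $K_\bullet(g;A)$, $K_\bullet(fg;A)$ in $\mathbf{G}_0(A)$ after tensoring with $K_\bullet(y_2,\dots,y_d;A)$ and taking Euler characteristics. The associativity formula for multiplicity refines this to
\[ e\bigl((f,y_2,\dots,y_d);A\bigr) = \sum_{\mathfrak{p}} m_{\mathfrak{p}} \cdot e\bigl(\bar f; A/\mathfrak{p}\bigr), \qquad m_{\mathfrak{p}} := e\bigl((y_2,\dots,y_d); A_{\mathfrak{p}}\bigr), \]
where $\mathfrak{p}$ ranges over the one-dimensional minimal primes of $(y_2,\dots,y_d)$. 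Each $A/\mathfrak{p}$ is a one-dimensional complete local domain with residue field $k = \bar k$, so its normalization is $k[[t_{\mathfrak{p}}]]$ and $e(\bar f; A/\mathfrak{p}) = v_{\mathfrak{p}}(\bar f)$ is the valuation of $f$ along $\mathfrak{p}$.

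Using approximation across the finitely many $\mathfrak{p}$, the tuple $(v_{\mathfrak{p}}(\bar f))_{\mathfrak{p}}$ can be prescribed to take any sufficiently large values by suitable choice of $f \in A$, so the new generic degree equals the weighted sum $\sum_{\mathfrak{p}} m_{\mathfrak{p}} v_{\mathfrak{p}}(\bar f)$. The principal obstacle is arranging the weights $m_{\mathfrak{p}}$ so that $\gcd(m_{\mathfrak{p}})$ is coprime to $p$, since only then does the numerical semigroup $\sum m_{\mathfrak{p}} \mathbb{N}$ meet residues coprime to $p$; for a sufficiently general tail $(y_2,\dots,y_d)$ the primes $\mathfrak{p}$ avoid the non-regular locus of $A$ in codimension $d-1$, so each $A_{\mathfrak{p}}$ is a DVR and $m_{\mathfrak{p}}=1$, with any remaining bad primes handled inductively on $\dim A$ by applying the theorem to the $(d-1)$-dimensional domains $A/\mathfrak{p}$. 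Once such an $f$ is produced, $R = k[[f, y_2, \dots, y_d]] \hookrightarrow A$ is a Noether normalization with $[K(A):K(R)]$ coprime to $p$ by construction.
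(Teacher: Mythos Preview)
Your strategy has the right overall shape---cut down by $d-1$ parameters and control the final multiplicity via a dimension-one computation---but the execution contains a dimensional confusion that breaks the argument. The primes $\fp$ you sum over are the minimal primes of $(y_2,\dots,y_d)$ with $\dim A/\fp=1$, so $\fp$ has height $d-1$ and $A_{\fp}$ has dimension $d-1$; it is never a DVR for $d\ge 3$. Even when $A_{\fp}$ is regular, the weight $m_{\fp}=e((y_2,\dots,y_d);A_{\fp})$ equals $1$ only if $(y_2,\dots,y_d)A_{\fp}=\fp A_{\fp}$, a far stronger condition than regularity of the localization and not one that a bare genericity argument secures. The proposed induction is similarly off: $A/\fp$ is one-dimensional, not $(d-1)$-dimensional, and in any case a statement about Noether normalizations of $A/\fp$ says nothing about the weights $m_{\fp}$, which live in $A_{\fp}$. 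The argument therefore never establishes that $\gcd_{\fp} m_{\fp}$ is prime to $p$, which is the entire crux.

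The paper sidesteps the weight problem altogether by exploiting the generically \'etale property of the Cohen--Gabber subring $R$ more fully. Rather than fixing $y_2,\dots,y_d$ and varying only the first slot, it chooses the first $d-1$ parameters $f_1,\dots,f_{d-1}$ \emph{inside} $R$, as part of a regular system of parameters for $R$, and moreover so that $h,f_1,\dots,f_{d-1}$ is a regular sequence in $R$, where $h$ is chosen with $R_h\to A_h$ \'etale. By flatness the $f_i$ then form an $A_h$-regular sequence, and $A'=A/(f_1,\dots,f_{d-1})$ is reduced after inverting $h$ (being \'etale over the regular ring $R/(f_1,\dots,f_{d-1})$). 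Consequently the higher Koszul homology $H_i(\bof,A)$ for $i>0$ and the nilradical $\fN$ of $A'$ are all supported at $\{\fm\}$, so $\Phi_{(f_d)}$ annihilates their classes. This yields the exact equality $e_I(A)=\ell(B/f_dB)$ with $B=A'/\fN$, and the one-dimensional lemma supplies an $f_d$ making $\ell(B/f_dB)$ prime to $p$. In short: instead of controlling the associativity weights, choose the partial parameter system inside $R$ so that every error term becomes zero-dimensional and drops out.
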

Note that when $A$ has characteristic $p > 0$, the condition that $p$ does not divide $[K(A):K(R)]$ will force the map to be generically \'{e}tale. Since the generic degree of the map is just the Hilbert-Samuel multiplicity of $A$ with respect to the parameter ideal $(X_1, \cdots X_d)$, the technical heart of the proof is the following statement, which appears below as Theorem \ref{gabber_mult}:

\begin{thm*}Let $(A,\fm)$ be a complete, reduced local ring of equidimension $d > 0$ with algebraically closed residue field. Suppose that the residue fields $k(\fp)$ for each minimal prime $\fp \subseteq A$ all have the same characteristic. Then for each prime $p \in \mathbb{Z}$, there exists a parameter ideal $I = (f_1, f_2, \cdots, f_d) \subseteq A$ such that $e_I(A)$ is relatively prime to $p$.
\end{thm*}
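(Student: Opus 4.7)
The plan is to combine Gabber's theorem, the associativity formula for Hilbert--Samuel multiplicities, and a Koszul factorization on $\GG_0(A)$ that controls how the multiplicity of a parameter ideal varies.

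First I would apply Gabber's theorem to the complete reduced equidimensional ring $A$, producing a finite generically \'etale inclusion $R = k[[X_1,\dots,X_d]] \hookrightarrow A$. Combining the associativity formula with the identity $e_{\fm_R}(R) = 1$ for the regular ring $R$ gives
\[
e_{\fm_R A}(A) \;=\; \sum_{\fp \text{ minimal}} [K(A/\fp):K(R)] \;=:\; n,
\]
and Gabber's generic \'etaleness forces $n$ to be coprime to the common residue characteristic $p_0$ of the fields $k(\fp)$. This settles the case $p = p_0$.

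For $p \neq p_0$ the generic degree $n$ can be divisible by $p$, so a different parameter ideal must be engineered. I would choose a primitive element $\theta \in \fm$ for the finite \'etale extension $K(R) \to A \otimes_R K(R) = \prod_{\fp} K(A/\fp)$, with monic annihilating polynomial $h(T) \in R[T]$ of degree $n$. For each $f$ in the ideal $(X_d) \subset R$, consider
\[
I_f \;=\; (X_1, \dots, X_{d-1}, \theta - f),
\]
which is $\fm$-primary for generic $f$. To compute $e_{I_f}(A)$ I would invoke the tensor-product factorization $K_\bullet(I_f; A) \cong K_\bullet(X_1;A) \otimes \cdots \otimes K_\bullet(X_{d-1};A) \otimes K_\bullet(\theta - f; A)$ together with the Koszul factorization on $\GG_0(A)$; projecting onto the top graded piece of the dimension filtration (where the multiplicity lives) should produce the formula
\[
e_{I_f}(A) \;=\; \ord_{X_d}\!\bigl(h(f) \bmod (X_1, \dots, X_{d-1})\bigr).
\]
Since $\bar h(T) = h(T) \bmod \fm_R \in k[T]$ splits completely over the algebraically closed residue field and since $f$ can be perturbed freely within $(X_d)$ -- say by adding higher-order terms $\gamma X_d^j$ to a linear base $\beta X_d$ -- a whole range of attainable values of $e_{I_f}(A)$ is produced, and in particular one coprime to $p$.

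The main technical obstacle will be to establish the multiplicity formula above rigorously. Since $A$ need neither be Cohen--Macaulay nor equal to $R[\theta]$, the naive determinantal computation of $e_{I_f}(A) = \ell_A(A/I_f)$ does not apply. It is here that the $\GG_0(A)$-level Koszul factorization is indispensable: the discrepancy between $A$ and the monogenic subring $R[\theta]$ is supported in codimension $\geq 1$ and so vanishes in the top graded piece of the dimension filtration of $\GG_0(A)$, which is precisely the piece that governs the multiplicity. A secondary challenge is that a single primitive element $\theta$ must work uniformly for all minimal primes of $A$, and this is where the hypothesis on the common characteristic of the $k(\fp)$ enters the argument, allowing Gabber's theorem and the primitive-element construction to apply coherently across all components.
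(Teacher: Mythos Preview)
Your handling of the case $p = p_0$ contains a genuine error: generic \'etaleness of $R \to A$ only says that each $K(A/\fp)/K(R)$ is \emph{separable}, and separable extensions in characteristic $p_0 > 0$ may certainly have degree divisible by $p_0$ (Artin--Schreier extensions, for instance). So $n = e_{\fm_R A}(A)$ need not be coprime to $p_0$; indeed, producing a normalization of degree prime to $p_0$ is exactly the content of the theorem, and Gabber alone does not deliver it. Your case split therefore leaves the hardest case unaddressed. The primitive-element strategy you propose for the remaining primes is suggestive but underspecified: the multiplicity formula you want is only immediate when $A = R[\theta]$ is Cohen--Macaulay, and the correction term $[A]-[R[\theta]]$ does not sit in $\GG_0(\Spec A)$ at all (since $R[\theta]$ is not an $A$-module), so the Koszul factorization would have to be set up over $R[\theta]$ or $R$ with some care. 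Even granting the formula, the claim that perturbing $f$ inside $(X_d)$ produces a value of $\ord_{X_d}(\bar h(\bar f))$ coprime to $p$ requires a Newton-polygon analysis of $\bar h$ that you have not supplied.

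The paper proceeds differently and treats all primes $p$ uniformly, with no case split and no primitive element. Rather than varying one parameter in dimension $d$, it reduces to dimension one: choose $f_1,\dots,f_{d-1}$ in $R$ so that each $R/(f_1,\dots,f_t)$ stays regular and the \'etale locus survives (concretely, $h,f_1,\dots,f_t$ is an $R$-regular sequence for a fixed $h$ with $R_h \to A_h$ \'etale). Then $B := \bigl(A/(f_1,\dots,f_{d-1})\bigr)_{\mathrm{red}}$ is a one-dimensional complete reduced local ring, and a direct calculation with its normalization $\widetilde B$ (a product of DVRs, all with residue field $k$) shows that the order map $\ord_B : Q^\times \to \mathbb{Z}$ is surjective, so some $f_d$ yields $\ell(B/f_d B)$ coprime to $p$. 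The Koszul factorization is then used not to compare $A$ with a monogenic subring but to show $e_I(A) = \ell(B/f_d B)$: the nilradical of $A/(f_1,\dots,f_{d-1})$ and the higher Koszul homologies $H_i(\bof,A)$ are all supported where $R \to A$ fails to be \'etale, hence are $\fm$-primary in the one-dimensional quotient, hence annihilated by $\Phi_{(f_d)}$.
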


When $d = 1$, we prove the result via direct calculation (Lemma \ref{dim_one}). To reduce to the one-dimensional case, we carefully select $d-1$ parameters to cut down the dimension of $A$ and exploit the relationship between the Koszul complex and multiplicity with respect to parameter ideals. In particular, taking Koszul homology gives well-defined endomorphisms on the Grothendieck group of $A$ which can be decomposed into simple factors. We develop this notion of ``Koszul factorizations'' systematically in Section 1. The first two parts of Section 2 are dedicated to proving the main theorems. We close, in Section 2.3, by indicating some analogous results in the mixed-characteristic setting.

\section{Koszul Factorization}
\begin{notns}Fix a Noetherian ring $A$. For a closed subset $Y \neq \emptyset$ of $\Spec A$, we define $\mcm(Y)$ to be the Serre subcategory of finitely-generated $A$-modules $M$ whose support lies inside of $Y$. By $\GG_0(Y)$ we shall mean the Grothendieck group $K_0(\mcm(Y))$. Given a module $M \in \mcm(Y)$, we shall write $[M]$ for the corresponding class in $\GG_0(Y)$.
\end{notns}

\begin{defn}Let $\bx = (x_1, \cdots, x_n)$ be a collection of elements of $A$ whose vanishing set $V(\bx)$ has nonempty intersection with $Y$. Define $\Phi_{\bx}:\GG_0(Y) \to \GG_0(Y \cap V(\bx))$ via the rule
\[ [M] \mapsto \sum_{i=0}^{n}{(-1)^i [H_i(\bx,M)]} \]
where by $H_i(\bx,M)$ we mean the homology of the Koszul complex $K(\bx,M)$. The well-definedness of this map follows from the functoriality of the Koszul complex and the fact that $\bx \subseteq \operatorname{Ann}(H_i(\bx,M))$ \cite[IV, pp. 6-7]{Serre}.
\end{defn}

Since the Koszul complex $K(\bx, M)$ may be realized as the iterated tensor product $K(x_1, A) \otimes_A \cdots \otimes_A K(x_n, A) \otimes_A M$, we can use the associated spectral sequence to give a ``factorization'' of the map $\Phi_{\bx}$:

\begin{lem}\label{composefilter}Let $\bx = (x_1, \cdots, x_m)$, $\by = (y_1, \cdots, y_n)$ and put \newline $\bx + \by = (x_1, \cdots, x_m, y_1, \cdots, y_n)$. Fix some closed $Y \subseteq \Spec A$.
There is then a commutative diagram
\[\xymatrix{
\GG_0(Y) \ar[r]^{\Phi_{\by}} \ar[dr]_{\Phi_{\bx + \by}} & \GG_0(Y \cap V(\by)) \ar[d]^{\Phi_{\bx}} \\
																												& \GG_0(Y \cap V(\bx + \by))
}\]
\end{lem}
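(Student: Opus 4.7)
The plan is to verify the identity on generators $[M] \in \GG_0(Y)$ by exploiting the factorization $K(\bx + \by, M) \cong \Tot(K(\bx, A) \otimes_A K(\by, M))$ of the Koszul complex and the associated spectral sequence. Filtering the total complex by rows produces a first-quadrant spectral sequence with
$$E^2_{p,q} = H_p(\bx, H_q(\by, M))$$
abutting to $H_{p+q}(\bx + \by, M)$. Since $\by$ annihilates each $H_q(\by, M)$ and $M$ is supported in $Y$, the class $[H_q(\by, M)]$ lives in $\GG_0(Y \cap V(\by))$; similarly, every $E^2_{p,q}$ is annihilated by $\bx + \by$, so all alternating sums below take place naturally in $\GG_0(Y \cap V(\bx + \by))$.

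The core of the argument is the general Euler-characteristic lemma: for a bounded spectral sequence of finitely generated modules (with fixed support), the alternating sum
$$\chi(E^r) := \sum_{p,q} (-1)^{p+q} [E^r_{p,q}]$$
is independent of $r$. This follows because, grouping by total degree $n = p+q$, the differential $d^r$ becomes a chain complex whose homology is $\bigoplus_{p+q=n} E^{r+1}_{p,q}$, and the alternating sum of a chain complex agrees with that of its homology in $\GG_0$. Convergence together with the splitting of short exact sequences in $\GG_0$ then gives
$$\chi(E^\infty) = \sum_n (-1)^n [H_n(\bx + \by, M)] = \Phi_{\bx + \by}([M]).$$

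Combining these observations,
$$\Phi_{\bx + \by}([M]) = \sum_{p,q} (-1)^{p+q}[H_p(\bx, H_q(\by, M))] = \sum_q (-1)^q \Phi_{\bx}([H_q(\by, M)]) = \Phi_{\bx}(\Phi_{\by}([M])),$$
where the middle equality is the $\mathbb{Z}$-linearity of $\Phi_{\bx}$ applied term by term. The only genuine subtlety is the Euler-characteristic invariance across pages of the spectral sequence together with verifying that all classes live in the correct Grothendieck group; once that bookkeeping is in place, the rest is a formal unpacking of the double-complex identification.
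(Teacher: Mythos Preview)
Your argument is correct and follows the same route as the paper: identify $K(\bx+\by,M)$ with the total complex of $K(\bx,A)\otimes_A K(\by,M)$, run the spectral sequence $E^2_{p,q}=H_p(\bx,H_q(\by,M))\Rightarrow H_{p+q}(\bx+\by,M)$, and compare Euler characteristics. The paper simply asserts the equality of alternating sums between the $E^2$ page and the abutment, while you spell out the Euler-characteristic invariance across pages more explicitly; otherwise the proofs are the same.
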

\begin{proof}For any $M \in \mcm(Y)$, the Koszul complex $K(\bx + \by,M)$ may be realized as the total complex of $K(\bx,A) \otimes_A K(\by,M)$. By considering the spectral sequence of the double complex
\[E^2_{pq} = H_p(\bx,H_q(\by,M)) \Rightarrow H_{p+q}(\bx+\by,M), \]
it's clear that the $E^2$ page is bounded and supported on $Y \cap V(\bx + \by)$, thereby giving rise to the relation
\[ \begin{array}{rcl} \ds \sum_{i=0}^{m+n}{(-1)^i[H_i(\bx+\by,M)]} & = & \ds \sum_{p,q \geq 0}{(-1)^{p+q}[E^2_{pq}]} \vspace{2mm}\\
                                                               & = & \ds \sum_{p=0}^{m}{\sum_{q=0}^{n}{(-1)^{p+q}[H_p(\bx,H_q(\by,M))]}} \end{array}\]
in $\GG_0(Y \cap V(\bx +\by))$. The right-hand side is, of course, $\Phi_{\bx}(\Phi_{\by}([M]))$.
\end{proof}

\begin{lem}\label{vanish}Let $\bx = (x_1, \cdots, x_m)$ and suppose that for some $i$ and $k$, $x_i^k\cdot M = 0$. Then $\Phi_{\bx}([M]) = 0$.
\end{lem}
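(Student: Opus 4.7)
The plan is to use Lemma \ref{composefilter} to collapse the problem to the Koszul complex of the single element $x_i$, at which point a short filtration argument will finish things off.

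First, since the Koszul complex $K(\bx, M) = K(x_1,A) \otimes_A \cdots \otimes_A K(x_m,A) \otimes_A M$ is (up to isomorphism of complexes) independent of the order of the sequence $\bx$, I may reorder and assume $i = m$. Writing $\bx = \bx' + (x_m)$ with $\bx' = (x_1, \ldots, x_{m-1})$ and applying Lemma \ref{composefilter}, I obtain
\[\Phi_{\bx}([M]) = \Phi_{\bx'}\bigl(\Phi_{(x_m)}([M])\bigr),\]
so it suffices to prove $\Phi_{(x_m)}([M]) = 0$.

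Next, I would exploit the descending filtration
\[M \supseteq x_m M \supseteq x_m^2 M \supseteq \cdots \supseteq x_m^k M = 0,\]
whose successive quotients $x_m^j M / x_m^{j+1} M$ are each annihilated by $x_m$. Since $\Phi_{(x_m)}$ is a well-defined homomorphism on $\GG_0$, this reduces the problem to modules $N$ with $x_m N = 0$. For such $N$, the two-term Koszul complex $K(x_m, N)$ is simply the zero map $N \to N$, giving $H_0(x_m,N) = H_1(x_m,N) = N$, whence $\Phi_{(x_m)}([N]) = [N] - [N] = 0$.

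I do not anticipate any real obstacle: the factorization lemma does the heavy lifting, reducing the statement to the trivial single-element, single-power case. The only subtlety worth flagging is the permutation-invariance used to bring $x_i$ to the end of the sequence, but this is immediate from the realization of the Koszul complex as an iterated tensor product.
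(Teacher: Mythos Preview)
Your argument is correct and follows essentially the same route as the paper: reduce to the single element $x_i$ via Lemma~\ref{composefilter}, then use the $x_i$-adic filtration on $M$ together with the observation that $\Phi_{(x_i)}$ vanishes on any module killed by $x_i$ (since the Koszul differential is zero). The only cosmetic difference is that you permute $x_i$ to the end and apply $\Phi_{(x_m)}$ first, whereas the paper invokes the fact that the $\Phi_{(x_j)}$ may be composed in any order and works with $\Phi_{(x_i)}$ directly.
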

\begin{proof}By Lemma \ref{composefilter}, $\Phi_{\bx}$ is just the composition of all of the $\Phi_{(x_j)}$, and the maps can be composed in any order. Thus, it suffices to show that $\Phi_{(x_i)}([M]) = 0$. The $x_i$-adic filtration on $M$ gives exact sequences
\[ 0 \to x_i^{p+1}M \to x_i^p M \to \frac{x_i^p M}{x_i^{p+1} M} \to 0. \]
If we denote by $N_p$, the rightmost term of this sequence, we see that the Koszul complex $K(x_i,N_p)$ has $0$-differential, whence $\Phi_{(x_i)}([N_p]) = 0$. Thus, $\Phi_{(x_i)}([x_i^p M]) = \Phi_{(x_i)}([x_i^{p+1} M])$ for all $p \geq 0$ and the conclusion follows.
\end{proof}

\begin{prop}Let $(A,\fm)$ be a Noetherian local ring and let $M$ be a $d$-dimensional $A$-module. Let $\bx = (x_1, \cdots, x_m)$ and put $Y = \Supp M$. Suppose that $\dim(Y \cap V(\bx)) = d-m$. Then there exists a $(d-m)$-dimensional module $N \in \mcm(Y \cap V(\bx))$ such that
\[ [N] = \Phi_{\bx}([M]) \hspace{5mm} \mbox{in $\GG_0(Y \cap V(\bx))$.} \]
\end{prop}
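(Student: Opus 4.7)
The plan is to reduce to the case $m = 1$ and then iterate, using Lemma \ref{composefilter} to compose. Writing $\bx_i = (x_1, \ldots, x_i)$, I first claim that $\dim(Y \cap V(\bx_i)) = d - i$ for every $0 \leq i \leq m$. The inequality $\dim(Y \cap V(\bx_i)) \geq d - i$ comes from Krull's Hauptidealsatz applied $i$ times to $Y$; conversely, applying Krull another $m-i$ times and invoking the hypothesis $\dim(Y \cap V(\bx)) = d - m$ forces the reverse inequality. With this bookkeeping in hand, I proceed by induction on $i$ to construct $(d-i)$-dimensional modules $N_i \in \mcm(Y \cap V(\bx_i))$ with $[N_i] = \Phi_{\bx_i}([M])$, starting from $N_0 := M$. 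The desired module is $N := N_m$.

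For the inductive step from $N_{i-1}$ to $N_i$, set $L := N_{i-1}$ and $x := x_i$, and let $T \subseteq L$ be the submodule of $x$-power torsion elements. Since $L$ is finitely generated, $T = \ker(x^K \colon L \to L)$ for $K \gg 0$, so Lemma \ref{vanish} applied to $T$ yields $\Phi_{(x)}([T]) = 0$. On the other hand, no minimal prime $\fp$ of $\Supp L$ of maximal dimension $d - i + 1$ can contain $x$: if it did, then $V(\fp) \subseteq Y \cap V(\bx_i)$ would force $\dim(Y \cap V(\bx_i)) \geq d - i + 1$, contradicting the first paragraph. Consequently $\dim T \leq d - i$, so $L' := L/T$ has $\dim L' = d - i + 1$, and $x$ is a nonzerodivisor on $L'$ by construction. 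Breaking $[L] = [T] + [L']$ in $\GG_0(Y \cap V(\bx_{i-1}))$ and applying $\Phi_{(x)}$, the $T$-piece vanishes and the Koszul homology of $(x)$ on $L'$ collapses to
\[\Phi_{(x)}([L]) \;=\; [L'/xL'] - [\ker(x \colon L' \to L')] \;=\; [L'/xL'].\]
Setting $N_i := L'/xL'$, the standard dimension drop for a nonzerodivisor in the maximal ideal gives $\dim N_i = d - i$, while Lemma \ref{composefilter} yields $[N_i] = \Phi_{(x_i)}(\Phi_{\bx_{i-1}}([M])) = \Phi_{\bx_i}([M])$, completing the induction.

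The argument has no deep step; the one thing to watch is the propagation of the dimension hypothesis to every intermediate ideal $\bx_i$, since what makes the inductive step work is precisely that $x_i$ avoids all top-dimensional minimal primes of $\Supp N_{i-1}$. This is what lets the $x$-torsion be killed off via Lemma \ref{vanish} without altering the class $\Phi_{\bx_i}([M])$, and it is also what guarantees that the quotient by $x_i$ genuinely drops the dimension by one rather than by more.
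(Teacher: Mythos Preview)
Your proof is correct and follows essentially the same route as the paper's: induct on the length of the sequence, at each step strip off the $x_i$-power torsion (which Lemma~\ref{vanish} shows contributes nothing to $\Phi_{(x_i)}$), and then observe that $x_i$ is a non-zerodivisor on the quotient so that $\Phi_{(x_i)}$ returns an honest module of the correct dimension. The one point you handle more explicitly than the paper is the verification that $\dim(Y \cap V(\bx_i)) = d-i$ for every intermediate $i$, which the paper's induction uses but does not spell out.
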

\begin{proof}We put $\by = (x_1, \cdots, x_{m-1})$. For the case of $m=1$ (i.e. $\by$ is an ``empty'' sequence), we shall declare $V(\by) = \Spec A$ and $\Phi_{\by}$ to be the identity on $\GG_0(Y)$. By induction, we may assume that there exists a $(d-m+1)$-dimensional module $N \in \mcm(Y \cap V(\by))$ such that
\[\Phi_{\by}([M]) = [N] \hspace{5mm} \mbox{in $\GG_0(Y \cap V(\by))$}. \]
From \ref{composefilter}, $\Phi_{(x_m)} \circ \Phi_{\by} = \Phi_{\bx}$, and hence,
\[ \Phi_{\bx}([M]) = \Phi_{(x_m)}([N]) \hspace{5mm} \mbox{in $\GG_0(Y \cap V(\bx))$}.\]

Consider the module $\Gamma_{(x_m)}(N) = \left\{u \in N : x_m^j u = 0 \mbox{ for some $j$} \right\}$. Since $N$ is finitely-generated, there exists a $k > 0$ such that $x_m^k \Gamma_{(x_m)}(N) = 0$. From the exact sequence
\[ 0 \to \Gamma_{(x_m)}(N) \to N \to N'' \to 0, \]
we know that since
\[ \dim(Y \cap V(\by)) = d-m+1 > \dim(Y \cap V(\by, x_m)), \]
$x_m$ lies outside of all primes $\fp \in Y \cap V(\by)$ such that $\dim(A/\fp) = d-m+1$. Thus, $\dim(\Gamma_{(x_m)}(N)) < d-m+1$, thereby forcing $\dim N'' = \dim N$. Using Lemma \ref{vanish} and noting that $x_m$ is, by definition, a non-zerodivisor on $N''$ gives the equation
\[ \Phi_{\bx}([M]) = \Phi_{(x_m)}([N]) = \Phi_{(x_m)}([N'']) = [N''/x_m N'']  \]
in $\GG_0(Y \cap V(\bx))$ where $N''/x_m N''$ is a genuine $(d-m)$-dimensional module in $\mcm(Y \cap V(\bx))$.
\end{proof}

\begin{rem}In the above proposition, we know that the class $[N] \in \GG_0(Y \cap V(\bx))$ is nontrivial: since $\dim N = \dim(Y \cap V(\bx))$, $[N]$ will evaluate to a positive number under the map $\GG_0(Y \cap V(\bx)) \to \mathbb{Z}$ given by
\[E \mapsto \sum_{\mathclap{\substack{\fp \in Y \cap V(\bx) \\ \dim(A/\fp) = d-m}}}{\ell(E \otimes A_\fp)}.\]
See Corollary \ref{serre2} below for a stronger statement.

Even if we omit the hypothesis that $\dim(Y \cap V(\bx)) = d-m$, we still have an equality
\[ [N] = \Phi_{\bx}([M]) \hspace{5mm} \mbox{in $\GG_0(Y \cap V(\bx))$.} \]
for some $N \in \mcm(Y \cap V(\bx))$. However, it can easily occur that $N = 0$ in this less restrictive case.
\end{rem}
\subsection{Hilbert-Samuel Multiplicity}
Let $(A,\fm)$ be a Noetherian local ring and let $M$ be a finitely-generated module. Suppose that $\fa \subseteq A$ is an ideal for which $\ell(M/\fa M) < \infty$. We define the Hilbert-Samuel multiplicity of $M$ with respect to $\fa$ via
\[ e_{\fa}(M) = \lim_{n \to \infty} \frac{d!}{n^d} \ell(M/\fa^n M) \hspace{5mm} (d = \dim M). \]
To obtain a function which is additive over exact sequences, we introduce, for each $r \leq \dim M$, the modified multiplicity function:
\[ e_{\fa}(M,r) =  \lim_{n \to \infty} \frac{r!}{n^r} \ell(M/\fa^n M) = \left\{ \begin{array}{ll} e_{\fa}(M) & r = \dim M \\ 0 & r > \dim M \end{array} \right. .\]
If $Y \subseteq \Spec A$ is a closed subset of dimension $r$ and $Y \cap V(\fa) = \left\{\fm \right\}$, then $e_{\fa}(-,r)$ is additive over short exact sequences in $\mcm(Y)$ \cite[II, Prop. 10]{Serre} and so defines a homomorphism $\GG_0(Y) \to \mathbb{Z}$. For $Z = \left\{\fm \right\}$, $\GG_0(Z)$ is simply the Grothendieck group on finite-length $A$-modules and $e_{\mathbf{0}}(-,0)$ coincides with the length function $\ell : \GG_0(Z) \to \mathbb{Z}$.

\begin{thm}\label{serre_mult}\cite[IV.3, Thm. 1]{Serre} Let $(A,\fm)$ be a Noetherian local ring and suppose that $M$ is a finitely generated module. Put $\bx = (x_1, x_2, \cdots, x_r)$ and suppose that $\ell(M/\bx M) < \infty$. Then
\[ e(M,r) = \sum_{p = 0}^{r}{(-1)^p \ell(H_p(\bx,M))}. \]
\end{thm}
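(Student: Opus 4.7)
Proof plan: The identity can be reinterpreted as an equality of two additive functions $\GG_0(\Supp M) \to \mathbb{Z}$ evaluated at $[M]$. The right-hand side is the composite $\ell \circ \Phi_{\bx}$ of the Koszul Euler characteristic with the length function on $\GG_0(\{\fm\})$, additive by the constructions above. The left-hand side $e_{\bx}(-,r)$ is additive on $\GG_0(Y)$ by the cited Serre, II, Prop.~10. By dévissage, $\GG_0(\Supp M)$ is generated by classes $[A/\fp]$ for $\fp \in \Supp M$, so it suffices to verify the identity in the case $M = A/\fp$, which I would do by induction on $r$.

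For the base case $r = 0$, both sides reduce to $\ell(M)$. For the inductive step, set $\by = (x_1, \ldots, x_{r-1})$ and exploit the Koszul long exact sequence arising from the identification $K(\bx,M) = K(\by,M) \otimes_A K(x_r,A)$:
\[ \cdots \to H_i(\by,M) \xrightarrow{x_r} H_i(\by,M) \to H_i(\bx,M) \to H_{i-1}(\by,M) \to \cdots, \]
together with the factorization $\Phi_{\bx} = \Phi_{(x_r)} \circ \Phi_{\by}$ from Lemma~\ref{composefilter}.

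Split into two cases according to $d = \dim A/\fp$. If $d = r$, then $\bx$ is a system of parameters for $A/\fp$ and the Proposition above yields $\Phi_{\by}([A/\fp]) = [N]$ for a $1$-dimensional $N \in \mcm(V(\by) \cap V(\fp))$, reducing the problem to the case $r = 1$. There, for a $1$-dimensional module $N$ and parameter $x$, the identity $e_{(x)}(N,1) = \ell(N/xN) - \ell((0:_N x))$ follows by a direct filtration argument comparing $\ell(N/x^n N)$ to $\sum_{k<n} \ell(x^k N / x^{k+1} N)$. If $d < r$, then after reordering we may arrange that some $x_j \notin \fp$ (since $(\bx) \not\subseteq \fp$); multiplication by $x_j$ is then injective on $A/\fp$, and Lemma~\ref{vanish} combined with the factorization of $\Phi_{\bx}$ forces the right-hand side to $0$, matching $e_{\bx}(A/\fp, r) = 0$.

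The main obstacle is the degenerate case $d < r$: the intermediate Koszul homologies $H_i(\by, A/\fp)$ need not have finite length, so one cannot naively telescope the alternating sum of lengths. The resolution is to do the bookkeeping inside $\GG_0(Y \cap V(\bx))$ rather than at the level of lengths, using Lemma~\ref{composefilter} to peel off the "free" element $x_j \notin \fp$ and Lemma~\ref{vanish} to annihilate the resulting contribution.
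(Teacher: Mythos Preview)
The paper does not prove this statement at all: Theorem~\ref{serre_mult} is quoted with a bare citation to Serre and is used as a black box to derive Corollary~\ref{serre2}. So there is no ``paper's own proof'' to compare against; what you have sketched is essentially Serre's original argument via additivity and d\'evissage to $A/\fp$.

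That said, your sketch has two genuine gaps. In the case $d = r$ you correctly reduce the Koszul side, obtaining $\Phi_{\by}([A/\fp]) = [N]$ with $\dim N = 1$, but you never explain why the \emph{multiplicity} side reduces correspondingly, i.e.\ why $e_{\bx}(A/\fp,r) = e_{(x_r)}(N,1)$. That identity is exactly the content of Corollary~\ref{serre2}, which is deduced \emph{from} Theorem~\ref{serre_mult}, so invoking it here is circular. The standard fix is to do the induction one variable at a time: when $x_r \notin \fp$ it is a nonzerodivisor on $A/\fp$, and then both $e_{\bx}(A/\fp,r) = e_{\by}(A/(\fp,x_r),r-1)$ (a direct Hilbert--Samuel computation) and $\chi_{\bx}(A/\fp) = \chi_{\by}(A/(\fp,x_r))$ (from the factorization $\Phi_{\bx} = \Phi_{\by}\circ\Phi_{(x_r)}$) hold, so the inductive hypothesis on $r-1$ applies.

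In the case $d<r$ your appeal to Lemma~\ref{vanish} is backwards: that lemma requires some $x_i$ to act \emph{nilpotently} on the module, whereas you have arranged for $x_j$ to act \emph{injectively} on $A/\fp$. Injectivity of $x_j$ gives $\Phi_{(x_j)}([A/\fp]) = [A/(\fp,x_j)]$, which is not zero in $\GG_0(V(\fp,x_j))$; you must then d\'evissage $A/(\fp,x_j)$ and invoke the inductive hypothesis on $r-1$, not Lemma~\ref{vanish}. (Separately, when $d=0$ one has $\fp=\fm$ and \emph{every} $x_j\in\fp$, so ``some $x_j\notin\fp$'' fails; but there Lemma~\ref{vanish} genuinely does apply since $x_1\cdot(A/\fm)=0$.)
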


\begin{cor}\label{serre2} Let $(A,\fm)$ be a Noetherian local ring and let $Y$ be a closed subset of $\Spec A$. Put $\bx = (x_1, x_2, \cdots, x_r), \bx' = (x_{r+1}, x_{r+2}, \cdots, x_{r+s})$ and suppose that $Y \cap V(\bx + \bx') = \left\{\fm \right\}$. Then there is a commutative diagram:
\[ \xymatrix{
\GG_0(Y) \ar[r]^{e_{\bx + \bx'}(-,r+s)} \ar[d]_{\Phi_{\bx}} & \mathbb{Z} \\
\GG_0(Y \cap V(\bx)) \ar[ur]^{e_{\bx'}(-,s)} \ar[r]^{\Phi_{\bx'}} & \GG_0(\left\{ \fm \right\}) \ar[u]^{\ell}
 } \]
\end{cor}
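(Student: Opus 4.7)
The plan is to verify commutativity of the two triangles composing the diagram separately, then glue them together using the Koszul factorization of Lemma \ref{composefilter}. The hypothesis $Y \cap V(\bx + \bx') = \{\fm\}$ is what makes everything go: for any $M \in \mcm(Y)$ it forces $\Supp(M/(\bx + \bx')M) \subseteq \{\fm\}$, so $\ell(M/(\bx + \bx')M) < \infty$ and Theorem \ref{serre_mult} applies. The same containment gives $\Supp(N/\bx' N) \subseteq \{\fm\}$ for any $N \in \mcm(Y \cap V(\bx))$, which both makes $e_{\bx'}(-,s)$ additive on $\mcm(Y \cap V(\bx))$ (thereby defining the diagonal homomorphism $\GG_0(Y \cap V(\bx)) \to \mathbb{Z}$) and ensures Theorem \ref{serre_mult} applies to such $N$ with the sequence $\bx'$.

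For the lower triangle, take any $N \in \mcm(Y \cap V(\bx))$ and apply Theorem \ref{serre_mult} with the sequence $\bx'$ of length $s$:
\[ e_{\bx'}(N,s) = \sum_{p=0}^{s} (-1)^p \ell(H_p(\bx', N)). \]
The right-hand side is, by definition of $\Phi_{\bx'}$ followed by the length map on $\GG_0(\{\fm\})$, precisely $\ell(\Phi_{\bx'}([N]))$. Hence $e_{\bx'}(-,s) = \ell \circ \Phi_{\bx'}$ on $\GG_0(Y \cap V(\bx))$.

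For the outer path, apply Theorem \ref{serre_mult} again, this time to $M \in \mcm(Y)$ against the concatenated sequence $\bx + \bx'$ of length $r+s$, to get
\[ e_{\bx + \bx'}(M, r+s) = \ell(\Phi_{\bx + \bx'}([M])). \]
Lemma \ref{composefilter} identifies $\Phi_{\bx + \bx'}$ with $\Phi_{\bx'} \circ \Phi_{\bx}$, and combining this with the lower-triangle identity yields $e_{\bx + \bx'}(M, r+s) = e_{\bx'}(\Phi_{\bx}([M]), s)$, which is the asserted commutativity. There is no genuine obstacle in the argument; the entire content lies in reading off the finiteness hypothesis for Serre's theorem from $Y \cap V(\bx + \bx') = \{\fm\}$ and assembling the pieces already developed above.
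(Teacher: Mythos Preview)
Your proof is correct and follows essentially the same route as the paper's: verify the lower triangle directly from Theorem \ref{serre_mult}, establish the outer square by combining Theorem \ref{serre_mult} with the factorization $\Phi_{\bx+\bx'} = \Phi_{\bx'} \circ \Phi_{\bx}$ from Lemma \ref{composefilter}, and then observe that the upper triangle is forced. Your explicit discussion of why the finiteness hypotheses of Theorem \ref{serre_mult} are met is a welcome addition that the paper leaves implicit.
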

\begin{proof}Commutativity of the lower triangle follows at once from Theorem \ref{serre_mult}. Since $\Phi_{\bx'} \circ \Phi_{\bx} = \Phi_{\bx+\bx'}$ by Lemma \ref{composefilter}, the outer square commutes again by appealing to Theorem \ref{serre_mult}. The upper triangle is now forced to commute for formal reasons.
\end{proof}

\section{Proof of the Theorem}
\subsection{Dimension-One}
\begin{lem}\label{dim_one}Let $(B,\fm)$ be a complete Noetherian local ring of dimension $1$. Suppose that $B$ is reduced and has algebraically closed residue field. Then for any fixed prime $p \in \mathbb{Z}$, there is a principal ideal $I = (f)$ such that $e_I(B) = \ell(B/fB)$ is relatively prime to $p$.
\end{lem}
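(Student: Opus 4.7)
My plan is to pass to the integral closure $C$ of $B$, use the short exact sequence $0 \to B \to C \to C/B \to 0$ to convert $\ell(B/fB)$ into a sum of valuations in $C$, and then exhibit an element $f \in B$ whose valuation vector sums to an integer coprime to $p$.

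First I would set up the normalization. Let $\fp_1, \ldots, \fp_r$ be the minimal primes of $B$. Each $B/\fp_i$ is a complete one-dimensional local domain, and since complete local rings are Nagata, its integral closure $V_i$ is a complete DVR module-finite over $B/\fp_i$. Because $k$ is algebraically closed, the residue field of $V_i$ is $k$ itself, so $\ell_B(V_i/\pi_i^n V_i) = n$ for the uniformizer $\pi_i$. Setting $C := V_1 \times \cdots \times V_r$ yields the integral closure of $B$ in its total ring of fractions; localizing at any minimal prime $\fp_i$ identifies $B_{\fp_i} = C_{\fp_i}$, so $D := C/B$ is supported only at $\fm$ and has finite length.

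Next I would derive the key length formula. For a non-zerodivisor $f \in B$ (automatically a non-zerodivisor on $C$), tensoring $0 \to B \to C \to D \to 0$ with $B/(f)$ and writing $T := \ker(f \colon D \to D)$ yields
\[ 0 \to T \to B/fB \to C/fC \to D/fD \to 0. \]
Additivity of length applied to $0 \to T \to D \xrightarrow{f} D \to D/fD \to 0$ forces $\ell(T) = \ell(D/fD)$, and so
\[ \ell(B/fB) \;=\; \ell(C/fC) \;=\; \sum_{i=1}^{r} v_i(f), \]
where $v_i$ denotes the normalized valuation on $V_i$. The problem is thereby reduced to producing some $f \in B$ with $\sum_i v_i(f)$ coprime to $p$.

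Finally I would construct such an $f$ via the conductor. Finite length of $D$ gives $\fm^N C \subseteq B$ for some $N$. Each map $B \to V_i$ is local, so Nakayama forces $\fm V_i \subsetneq V_i$, whence $\fm V_i = \pi_i^{e_i} V_i$ with $e_i \geq 1$; because $C$ is a product, $\fm^N C = \prod_i \pi_i^{N e_i} V_i$. Consequently, for any tuple $(n_1, \ldots, n_r)$ with $n_i \geq N e_i$, the element $(\pi_1^{n_1}, \ldots, \pi_r^{n_r}) \in C$ already lies in $B$, is a non-zerodivisor, and contributes $\sum_i n_i$ to the length formula; adjusting a single coordinate arranges $\sum_i n_i$ to be coprime to $p$. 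The technically delicate point is the length identity in the third paragraph, which rests squarely on the finiteness of the conductor $C/B$; once that is in hand, the construction of $f$ is bookkeeping.
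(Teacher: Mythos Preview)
Your proof is correct and follows the same overall strategy as the paper: pass to the normalization $\widetilde{B}$ (your $C$), establish the length identity $\ell(B/fB)=\ell(\widetilde{B}/f\widetilde{B})$ via the snake lemma applied to $0 \to B \to \widetilde{B} \to \widetilde{B}/B \to 0$ (using that $\widetilde{B}/B$ has finite length), and then exploit the fact that $\widetilde{B}$ is a finite product of complete DVRs with residue field $k$.

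The only difference is the endgame. The paper packages the length identity as an equality of order homomorphisms $\ord_B=\ord_{\widetilde{B}}\colon Q^\times\to\mathbb{Z}$ and shows $\ord_{\widetilde{B}}$ is \emph{surjective} by picking $h\in\widetilde{B}$ that uniformizes exactly one factor and is a unit in the others; writing $h=f/g$ with $f,g\in S\subset B$ then gives $\ell(B/fB)-\ell(B/gB)=1$, so $p$ cannot divide both. You instead use the conductor inclusion $\fm^N C\subseteq B$ to pull an explicit product of uniformizer powers $(\pi_1^{n_1},\ldots,\pi_r^{n_r})$ back into $B$ and adjust one exponent so that $\sum n_i$ is coprime to $p$. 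Your route is slightly more constructive (it produces a single $f$ directly), while the paper's yields the cleaner abstract statement that $\ord_B$ is onto; either way the substantive work is the length identity, and your treatment of it is correct.
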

\begin{proof}By hypothesis, $B$ has no embedded primes, so if $f \in B$ is such that $\dim(B/fB) = 0$, then $f$ is necessarily a non-zerodivisor. Putting $I = (f)$, it's easily seen that $\ell(B/I^n B) = n \ell(B/fB)$, whence $e_I(B) = \ell(B/fB)$. We therefore want to find an $f$ for which $p$ does not divide $\ell(B/fB)$.

Denote by $S$ the set of non-zerodivisors of $B$, and let $Q = S^{-1}B$ be the total quotient ring of $B$. For any $f,g \in S$, the short-exact sequence
\[ 0 \to B/fB \stackrel{g}{\rightarrow} B/fgB \to B/gB \to 0 \]
permits us to define a monoid homomorphism $S \to \mathbb{Z}$ via $f \mapsto \ell(B/fB)$. This naturally extends to an ``order'' homomorphism $\ord_B: Q^{\times} \to \mathbb{Z}$ defined by $\ord_B(f/g) = \ell(B/fB) - \ell(B/gB)$ (cf. \cite[A.3]{Fulton}). It will suffice to show that $\ord_B$ is surjective, for if $\ell(B/fB) - \ell(B/gB) = 1$, then $p$ cannot divide both terms.

Let $\widetilde{B}$ be the normalization of $B$ inside of $Q$. Since $B$ is complete, the map $B \to \widetilde{B}$ is finite. $\widetilde{B}$ is therefore a semilocal, one-dimensional ring, and since $k = B/\fm$ is algebraically closed, the residue field of each closed point of $\widetilde{B}$ is isomorphic to $k$. It follows that for every finite-length $\widetilde{B}$ module $M$, it does not matter whether we measure its length over $B$ or $\widetilde{B}$: $\ell_{B}(M) = \ell_{\widetilde{B}}(M)$ (cf. \cite[A.1]{Fulton}). Since there is no ambiguity, we shall henceforth drop the subscripts.

Since $B$ and $\widetilde{B}$ share the same total quotient ring $Q$, we can define another order function $\ord_{\widetilde{B}}: Q^{\times} \to \mathbb{Z}$ with $\ord_{\widetilde{B}}(f/g) = \ell(\widetilde{B}/f \widetilde{B}) - \ell(\widetilde{B}/g \widetilde{B})$. We claim that $\ord_B = \ord_{\widetilde{B}}$. Since $S$ generates $Q^{\times}$ as an abelian group, it will suffice to show that both order functions agree on $S$. Given a non-zerodivisor $f \in B$, it will continue to be a non-zerodivisor in $Q$ and hence also in $\widetilde{B}$. This gives rise to the following diagram of finitely-generated $B$ modules:

\[ \xymatrix{
         & 0 \ar[d]          & 0 \ar[d]                      & K \ar[d] &                          \\
0 \ar[r] & B \ar[r] \ar[d]_f & \widetilde{B} \ar[r] \ar[d]_f & \widetilde{B}/B \ar[r] \ar[d]_f & 0 \\
0 \ar[r] & B \ar[r] \ar[d]   & \widetilde{B} \ar[r] \ar[d]   & \widetilde{B}/B \ar[r] \ar[d]   & 0 \\
         & B/fB              & \widetilde{B}/f \widetilde{B} & C                               & 
} \]

Since $B$ is reduced and hence has no embedded primes, the collection of all non-zerodivisors $S$ is just the complement of the union of all minimal primes of $B$. As $S^{-1}B = Q = S^{-1}\widetilde{B}$, we have that $S^{-1}(\widetilde{B}/B) = 0$, meaning that $\widetilde{B}/B$ vanishes at every minimal prime of $B$ and hence is supported in dimension $0$. As such,
\[ 0 \to K \to \widetilde{B}/B \stackrel{f}{\rightarrow} \widetilde{B}/B \to C \to 0 \]
is an exact sequence of finite-length modules, whence $\ell(K) = \ell(C)$. From the snake lemma, we have
\[ 0 \to K \to B/fB \to \widetilde{B}/f \widetilde{B} \to C \to 0 \]
which implies that $\ell(B/f B) = \ell(\widetilde{B}/f \widetilde{B})$ as desired.

Finally, we show that $\ord_{B} = \ord_{\widetilde{B}}$ is surjective. $\widetilde{B}$, by construction, is a one-dimensional, semilocal normal ring and hence the localization at each maximal prime is a DVR. In fact, since $\widetilde{B}$ is complete with respect to the linear topology defined by its Jacobson radical, it must decompose into a direct product of DVRs. By choosing an $h \in \widetilde{B}$ which generates one of the maximal ideals and lies outside of all of the others, we see that $\widetilde{B}/h \widetilde{B}$ is a field -- that is, $\ord(h) = 1$.
\end{proof}

\begin{exmp}\label{bad}Lemma \ref{dim_one} can fail if the residue field is not algebraically closed. Let $L = \mathbb{F}_2(t)$ where $t$ is a transcendental element. Put $A = L[[X,Y]]/(X^2+XY+Y^2)$. It is easily checked that $A$ is a domain. Since $A$ has infinite residue field, every $\fm$-primary ideal $I$ has a principal reduction $(f) \subseteq I$ (\cite[14.14]{Matsumura}). We claim that $\ell_A(A/fA) = e_{(f)}(A) = e_I(A)$ is divisible by $2$ for every $0 \neq f$.

If we denote by $\tilde{A}$ the normalization, the proof of Lemma \ref{dim_one} shows that $\ell_A(A/fA) = \ell_A(\tilde{A}/f \tilde{A})$. Since $X^2+XY+Y^2 = 0$ in $A$, dividing by $Y^2$ gives the relation $\left(\frac{X}{Y}\right)^2 + \left( \frac{X}{Y} \right) + 1 = 0$ in the fraction field. Thus, $\frac{X}{Y}$ lies in $\widetilde{A}$ and satisfies an irreducible polynomial over $L$. The residue field of $\widetilde{A}$ therefore contains a quadratic extension of $L$, and hence, $2$ divides $\ell_A(M)$ for all Artinian $\tilde{A}$-modules $M$ (cf. \cite[A.1.3]{Fulton}).

In particular, if $R = k[[T]]$ is any power-series subring of $A$ over which $A$ is finite, then we know \cite[VIII.10, Cor. 2]{ZSII} that $e_{(T)}(A)[L:k] = [K(A):K(R)]$. Thus, the degree of the fraction field extension $[K(A):K(R)]$ will always be divisible by $2$.
\end{exmp}

\begin{rem}\label{min_hyp}Strictly speaking, it is not absolutely necessary that $A$ have algebraically closed residue field for the conclusion of Lemma \ref{dim_one} to hold. One simply needs the residue fields at closed points of $A$ and $\widetilde{A}$ to all be isomorphic. \end{rem}

\subsection{General Case}
We begin by recalling the Cohen-Gabber theorem from the introduction but include some slight modifications to include the classically-known cases of equicharacteristic $0$ and mixed-characteristic.

\begin{thm}\cite[IV.2.1.1]{GTG}(Gabber) Let $A$ be a complete, equidimensional, reduced local ring. Suppose that the residue fields $k(\fp)$ for each minimal prime $\fp \subseteq A$ all have the same characteristic. Then there exists a subring $R \subseteq A$ such that
\begin{enumerate}
\item $R$ is a complete regular local ring.
\item The map $R \to A$ is finite and generically \'{e}tale.
\item $R$ and $A$ share the same residue field.
\end{enumerate}
\end{thm}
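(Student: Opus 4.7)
The plan is to split into three cases based on the characteristics of $A$ and of $k$. The hypothesis on the $k(\fp)$'s, combined with reducedness, forces exactly one of: (i) $A$ has equal characteristic zero; (ii) $A$ has equal characteristic $p > 0$; or (iii) $A$ has mixed characteristic with residue characteristic $p$, in which case $p$ lies outside every minimal prime and is therefore a non-zerodivisor on $A$. Case (i) is handled by the classical Cohen structure theorem: choose a coefficient field $k \hookrightarrow A$ together with any system of parameters $x_1, \ldots, x_d$ and put $R := k[[x_1, \ldots, x_d]]$. Then $R$ is regular local of dimension $d$, shares its residue field with $A$, and $A$ is finite over $R$ by completeness and the $\fm$-primarity of $(x_1, \ldots, x_d)$. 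Generic \'{e}taleness is automatic since $K(R)$ and each $k(\fp)$ have characteristic zero.

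Case (iii) is handled analogously using the mixed-characteristic Cohen structure theorem: it supplies a coefficient ring $V \hookrightarrow A$, i.e., a complete DVR with residue field $k$ and uniformizer $p$. Because $p$ lies outside every minimal prime, $\dim(A/pA) = d - 1$, so $\{p\}$ extends to a system of parameters $(p, x_1, \ldots, x_{d-1})$ for $A$; put $R := V[[x_1, \ldots, x_{d-1}]]$. This is regular local of dimension $d$ with the same residue field as $A$, and $A$ is finite over $R$. Injectivity of the structure map $V[[T_1, \ldots, T_{d-1}]] \to A$ is a short dimension count: the image $R \subseteq A$ has dimension $d$ because $A$ is module-finite over it, so the kernel has height zero in the regular local domain $V[[T_1, \ldots, T_{d-1}]]$ and must vanish. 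Generic \'{e}taleness again comes for free because $K(R)$ and all $k(\fp)$ live in characteristic zero.

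Case (ii), equal characteristic $p$, is the genuine obstacle: the naive construction from case (i) can easily produce a purely inseparable extension of fraction fields, and correcting this requires a delicate perturbation of parameters using a $p$-basis of $k$. This is precisely Gabber's original theorem \cite[IV.2.1.1]{GTG}, which I would simply invoke (see also \cite{KS} for an elementary proof). All of the difficulty of the full statement is concentrated in this case; the other two amount to unpacking classical Cohen structure theory and observing that generic \'{e}taleness is automatic in characteristic zero.
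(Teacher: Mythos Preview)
Your proposal is correct and aligns with the paper's treatment. The paper does not give an independent proof of this theorem; it states it with a citation to Gabber \cite[IV.2.1.1]{GTG} and then, in the remark immediately following, explains exactly the trichotomy you describe: when the residue fields at the minimal primes have characteristic zero, classical Cohen structure theory yields a finite injective $R = V[[X_1,\dots,X_m]] \to A$ with $V$ a field or DVR of characteristic zero, and generic \'{e}taleness is automatic since $K(R)$ is perfect; the only nontrivial case is equicharacteristic $p>0$, which is precisely Gabber's contribution. Your write-up is slightly more detailed in separating the equicharacteristic-zero and mixed-characteristic cases and in checking injectivity via a dimension count, but the substance is the same.
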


\begin{rem}As we mentioned in the introduction, if the residue fields at each minimal prime all have characteristic zero, the classical Cohen structure theorem guarantees a finite, injective map $R = V[[X_1, \cdots, X_m]] \to A$ where $V$ is a field or DVR of characteristic $0$ (cf. \cite[29.4]{Matsumura}). In this case being generically \'{e}tale is automatic since the fraction field of $R$ will have characteristic $0$ and hence be perfect. The nontrivial case where $A$ has equicharacteristic $p > 0$ is precisely what Gabber proved.
\end{rem}

\begin{thm}\label{gabber_mult}Let $(A,\fm)$ be a complete, reduced local ring of equidimension $d > 0$ with algebraically closed residue field. Suppose that the residue fields $k(\fp)$ for each minimal prime $\fp \subseteq A$ all have the same characteristic. Then for each prime $p \in \mathbb{Z}$, there exists a parameter ideal $I = (f_1, f_2, \cdots, f_d) \subseteq A$ such that $e_I(A)$ is relatively prime to $p$.
\end{thm}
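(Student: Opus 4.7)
The plan is to proceed by induction on the dimension $d$. The base case $d = 1$ is exactly Lemma \ref{dim_one}. For the inductive step ($d > 1$), I first invoke the Cohen-Gabber theorem (recalled just above) to obtain a regular local subring $R \cong k[[X_1, \ldots, X_d]] \subseteq A$ over which $A$ is finite and generically \'etale. I take the first $d-1$ parameters to be $f_i := X_i$ for $i = 1, \ldots, d-1$, and set $B := A/(f_1, \ldots, f_{d-1})A$; this is a complete one-dimensional local ring (not necessarily reduced), finite over $k[[X_d]]$. Let $\fq_1, \ldots, \fq_r$ be the coheight-one primes of $A$ containing $(f_1, \ldots, f_{d-1})$. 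Each quotient $A/\fq_i$ is a complete, reduced, one-dimensional local domain with algebraically closed residue field, so Lemma \ref{dim_one} applies to it; I write $v_i$ for the valuation on the DVR normalization $\widetilde{A/\fq_i}$.

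Next, the Proposition of Section 1 (with $m = d-1$ and $M = A$) produces a one-dimensional class representing $\Phi_{(f_1, \ldots, f_{d-1})}([A])$ in $\GG_0(V(f_1, \ldots, f_{d-1}))$. By $\GG_0$-additivity together with Theorem \ref{serre_mult}, this class decomposes as
\[
\Phi_{(f_1,\ldots,f_{d-1})}([A]) \;=\; \sum_{i=1}^r n_i [A/\fq_i] \;+\; (\text{zero-dimensional terms}),
\]
where $n_i := e_{(f_1, \ldots, f_{d-1})}(A_{\fq_i}) \geq 1$ is the local Serre multiplicity. For any $f_d \in \fm$ not lying in any $\fq_i$, Lemma \ref{vanish} kills the zero-dimensional tail under $\Phi_{f_d}$, and Corollary \ref{serre2} then gives the key formula
\[
e_{(f_1, \ldots, f_d)}(A) \;=\; \sum_{i=1}^{r} n_i \cdot \ell\bigl((A/\fq_i)/f_d(A/\fq_i)\bigr) \;=\; \sum_{i=1}^{r} n_i \, v_i(\bar f_d),
\]
where $\bar f_d$ denotes the image of $f_d$ in $B$ and the second equality is the length-equals-valuation identification used in the proof of Lemma \ref{dim_one}.

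The remaining task is to select $f_d$ so that $\sum_i n_i v_i(\bar f_d)$ is coprime to $p$, and this is where the main obstacle lies. Because the conductor of $B_{\operatorname{red}}$ inside its semi-local normalization $\prod_i \widetilde{A/\fq_i}$ is $\fm$-primary, a weak-approximation argument permits me to realize any vector $(a_1, \ldots, a_r)$ of positive integers above a uniform threshold as $(v_i(\bar f_d))_i$ for a suitable $\bar f_d \in B$. The set of achievable sums $\sum_i n_i v_i(\bar f_d)$ is therefore a cofinite subset of $\gcd(n_1, \ldots, n_r)\,\mathbb{Z}$, which contains integers coprime to $p$ exactly when $p \nmid \gcd(n_1,\ldots,n_r)$. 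If the naive choice $f_i = X_i$ coming from Cohen-Gabber does not already satisfy this gcd condition, I would perturb the sequence--for instance, replacing $X_j$ by $X_j + g_j$ for a suitable $g_j \in \fm$--and use a genericity argument inside the regular ring $R$ to guarantee that some updated weight $n_i$ becomes coprime to $p$. Making this final perturbation rigorous, and ruling out the pathological case in which every admissible cut by $d-1$ parameters forces all local multiplicities to be divisible by $p$, is the subtlest step of the proof.
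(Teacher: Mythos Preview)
Your setup through the formula $e_I(A) = \sum_i n_i \, \ell\bigl((A/\fq_i)/f_d(A/\fq_i)\bigr)$ is correct and matches the paper's Koszul-factorization framework. The genuine gap is the final step: you correctly isolate the obstruction $p \mid \gcd(n_1,\ldots,n_r)$, but your proposed ``perturbation'' of the $X_j$ is not carried out, and there is no indication of how such a genericity argument would actually force some $n_i$ to become coprime to $p$. You invoke the generic \'etaleness coming from Cohen--Gabber but never use it; as written, the argument would go through just as well (and fail just as badly) starting from any Noether normalization.

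The paper's missing idea is precisely to exploit that \'etaleness to force \emph{every} $n_i = 1$. One fixes $h \in \fm$ with $R_h \to A_h$ \'etale, and then constructs $f_1,\ldots,f_{d-1} \in R$ (not simply $X_1,\ldots,X_{d-1}$) so that $R/(f_1,\ldots,f_{d-1})$ is regular and $h,f_1,\ldots,f_{d-1}$ is a regular sequence in $R$. The second condition makes $A'/hA'$ Artinian, so $h$ lies outside every minimal prime $\fq_i$ of $A' = A/(f_1,\ldots,f_{d-1})$. Since $R_h \to A_h$ is flat, the higher Koszul homologies $H_j(\bof,A)_h$ vanish for $j>0$; since $(R/(f_1,\ldots,f_{d-1}))_h \to A'_h$ is \'etale over a regular base, $A'_h$ is reduced. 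Localizing at any $\fq_i$ then yields $n_i = \ell(A'_{\fq_i}) = 1$. With the $\gcd$ obstruction gone, a single application of Lemma~\ref{dim_one} to $B = A'_{\mathrm{red}}$ produces $f_d$, and $e_I(A) = \ell(B/f_dB)$ is coprime to $p$. The careful choice of the $f_i$ relative to $h$ --- not a perturbation argument --- is what your proof is lacking.
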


\begin{proof}The idea is to reduce to the one-dimensional case where the result is known by Lemma \ref{dim_one}. By the Cohen-Gabber Theorem, we can find a regular local subring $R$ over which $A$ is finite and generically \'{e}tale. Choose $0 \neq h \in \fm$ such that $R_h \to A_h$ is \'{e}tale. Let $d = \dim A$. We now construct parameters $f_1, \cdots, f_{d-1}$ in $R$ such that
\begin{itemize}
\item[(a)] $R/(f_1, \cdots, f_{t})$ is a regular local ring for $0 \leq t \leq d-1$.
\item[(b)] $h,f_1, \cdots, f_{t}$ is a regular sequence in $R$ for $0 \leq t \leq d-1$.
\end{itemize}
When $t=0$, this just amounts to saying that $h$ is a non-zerodivisor on $R$, which is obvious. Assume that $f_1, \cdots, f_{t}$ have been constructed with $t < d-1$. Put $\overline{R} = R/(f_1, \cdots, f_t)$ and denote by $\fq$ its maximal ideal. By hypothesis, $h$ is a non-zerodivisor on $\overline{R}$, so that $\overline{R}/h\overline{R}$ is a Cohen-Macaulay ring of positive dimension, and hence, $\fq \notin \Ass_{\overline{R}}(\overline{R}/h\overline{R})$. By prime avoidance \cite[3.3]{Eisenbud} we have that $\fq \not\subset (\bigcup \operatorname{Ass}_{\overline{R}}(\overline{R}/h\overline{R})) \cup \fq^2$, meaning that we can choose some $\overline{f}_{t+1} \in \fq - \fq^2$ which is $\overline{R}/h\overline{R}$-regular. By lifting to an element $f_{t+1} \in R$, we we see that conditions (a) and (b) are satisfied.

Since $R \to A$ is finite, $A' := A/(f_1, \cdots, f_{d-1})$ is one-dimensional. Let $\fN$ be its nilradical and let $B = A'/\fN$. By Lemma \ref{dim_one}, there is a parameter $f_d \in A$ such that $\ell(B/f_d B)$ is not divisible by our fixed prime $p$. Note that since $B$ is reduced and has no embedded primes, $f_d$ is $B$-regular.

Put $\bof = (f_1, \cdots, f_{d-1})$ and $\bof'=(f_d)$. We claim that for the parameter ideal $I = \bof + \bof' = (f_1, \cdots, f_d)$, $e_I(A)$ is equal to $\ell(B/f_d B)$ and hence is relatively prime to $p$. Put $Y = \Spec A$, $Y' = Y \cap V(\bof) = \Spec A'$. From Corollary \ref{serre2} we have a diagram
\[ \xymatrix{
\GG(Y) \ar[r]^{e_I(-,d)} \ar[d]^{\Phi_{\bof}} & \mathbb{Z} \\
\GG(Y') \ar[r]^{\Phi_{\bof'}} & \GG(\left\{ \fm \right\}) \ar[u]^{\ell}
} \]
By the definition of $\Phi_{\bof}$ and the exact sequence $0 \to \fN \to A' \to B \to 0$, we have the relation
\[\Phi_{\bof}([A]) = [A'] + \sum_{i=1}^{d-1}{(-1)^i[H_i(\bof,A)]} = [B] + [\fN] + \sum_{i=1}^{d-1}{(-1)^i[H_i(\bof,A)]} \]
in $\GG_0(Y')$.

Since $R/(f_1, \cdots, f_{d-1}, h)$ is Artinian, so too is $A/(f_1, \cdots, f_{d-1},h) = A'/hA'$. Thus, $h$ lies outside of every minimal prime of the one-dimensional ring $A'$. In other words, if some finitely-generated $A'$ module $M$ is such that $M_h = 0$, then $M$ is supported only on the closed point and hence is killed by some power of $\fm$. Since $R_h \to A_h$ is \'{e}tale, then by base-change, so too is $\left(R/(f_1, \cdots, f_{d-1})\right)_h \to A'_h$. But $R/(f_1, \cdots, f_{d-1})$ is regular, meaning that $A'_h$ is regular (and, in particular, reduced). In other words, $\fN_h = 0$. Similarly, since $R_h \to A_h$ is flat, the $R$-regular sequence $f_1, \cdots, f_{d-1}$ must also be $A_h$-regular. In other words we obtain a vanishing of the Koszul homology: $H_i(\bof, A)_h = 0$ for $i > 0$. Thus, some power of $\fm$ --- and hence some power of $f_d$ --- kills each of the these modules, so by Lemma \ref{vanish},
\[ \Phi_{\bof'}([\fN]) = 0 \mbox{ and } \Phi_{\bof'}([H_i(\bof, A)]) = 0 \mbox{ for $i>0$}. \]
Since $f_d$ is a non-zerodivisor on $B$, we conclude that
\[ \begin{array}{rcl} \ds e_I(A,d) = \ell \circ \Phi_{\bof'} \circ \Phi_{\bof}([A]) & = & \ds \ell \circ \Phi_{\bof'} \left( [B] + [\fN] + \sum_{i=1}^{d-1}{(-1)^i[H_i(\bof,A)]} \right) \vspace{1mm}\\
																																										& =  & \ell(B/f_d B). \end{array}\]
\end{proof}

The proof of the sharpened Cohen-Gabber Theorem now follows immediately:

\begin{thm}\label{strong_cg}Let $(A,\fm,k)$ be a complete, equicharacteristic local domain of positive dimension whose residue field is algebraically closed. Then for any prime $p > 0$, there exists a regular subring $R = k[[X_1, \cdots X_d]]$ such that $R \to A$ is finite and $p$ is relatively prime to the generic degree $[K(A):K(R)]$.
\end{thm}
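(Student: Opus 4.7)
The strategy is to deduce the theorem directly from Theorem \ref{gabber_mult}, with only a small amount of packaging. Since $A$ is a complete equicharacteristic local domain of dimension $d > 0$ with algebraically closed residue field $k$, it satisfies the hypotheses of that theorem: it is reduced, equidimensional of dimension $d$ (having $(0)$ as its unique minimal prime), and the characteristic-agreement condition on residue fields of minimal primes is vacuous. Applying Theorem \ref{gabber_mult}, I would obtain a system of parameters $f_1, \ldots, f_d$ for $A$ such that $e_I(A)$ is relatively prime to $p$, where $I = (f_1, \ldots, f_d)$.

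The next step is to upgrade this parameter ideal into a genuine power-series subring of the required shape. Using the Cohen structure theorem, I would fix a coefficient field $k \hookrightarrow A$ and form the subring $R := k[[f_1, \ldots, f_d]] \subseteq A$, which is well-defined because $A$ is $\fm$-adically complete and $I$ is $\fm$-primary. Since the $f_i$ form a system of parameters and $A/IA$ has finite length over $k$, completeness together with the topological Nakayama lemma forces $A$ to be module-finite over $R$. The natural surjection $k[[X_1, \ldots, X_d]] \twoheadrightarrow R$ sending $X_i \mapsto f_i$ is then an isomorphism: its source is a $d$-dimensional domain, and its target $R$ contains $A$-regular elements (being a subring of the domain $A$) and has Krull dimension at least $d = \dim A$ since the extension $R \to A$ is integral. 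So $R \cong k[[X_1, \ldots, X_d]]$ is a regular local ring with residue field $k$.

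To conclude I would translate $e_I(A)$ into the generic degree. Since $R$ has maximal ideal generated by the $f_i$ and both $R$ and $A$ have residue field $k$, the multiplicity–degree formula from \cite[VIII.10, Cor. 2]{ZSII} cited in Example \ref{bad} gives
\[ [K(A):K(R)] \;=\; e_I(A) \cdot [k(A):k(R)] \;=\; e_I(A), \]
which is coprime to $p$ by construction. No step is a serious obstacle: the real work has been absorbed into Theorem \ref{gabber_mult}, and the only verification requiring a little care is that the chosen $f_i$ are analytically independent over $k$ so that $R$ really is a power series ring in $d$ variables rather than a proper quotient.
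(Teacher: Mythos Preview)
Your proposal is correct and follows essentially the same route as the paper: apply Theorem~\ref{gabber_mult} to obtain parameters $f_1,\ldots,f_d$ with $e_I(A)$ prime to $p$, embed $k$ as a coefficient field via Cohen's theorem, define $R$ as the image of $k[[X_1,\ldots,X_d]]$ under $X_i\mapsto f_i$, argue that this map is finite and injective for dimension reasons, and then invoke \cite[VIII.10, Cor.~2]{ZSII} to identify $[K(A):K(R)]$ with $e_I(A)$. The only difference is that you spell out the verification of the hypotheses of Theorem~\ref{gabber_mult} and the injectivity/finiteness in slightly more detail than the paper does.
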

\begin{proof}Fix the prime $p$. By Theorem \ref{gabber_mult}, we know that $A$ admits a system of parameters $I=(f_1, \cdots, f_d)$ (with $d > 0$) such that $e_I(A)$ is relatively prime to $p$. The Cohen structure theorem \cite[28.3]{Matsumura} says that $k$ embeds into $A$ as a coefficient field (i.e. $k \to A \to A/\fm$ is an isomorphism). We define $R = k[[T_1, \cdots T_d]] \to A$ via $T_i \mapsto f_i$ and obtain a finite morphism which must be injective for dimensional reasons. By \cite[VIII.10, Cor. 2]{ZSII}, we know that $[K(A):K(R)] = e_I(A)$ and hence is relatively prime to $p$. 
\end{proof}

\begin{rem}Example \ref{bad} shows that this theorem can fail in dimension one if the residue field is not algebraically closed. In view of Remark \ref{min_hyp}, the proofs of Theorems \ref{gabber_mult} and \ref{strong_cg} will go through as long as we can be assured that after taking $A$ and going modulo the $f_1, \cdots f_{d-1}$, the residue field at the maximal ideal of $B = (A/(f_1, \cdots f_{d-1}))_{\operatorname{red}}$ is isomorphic to those of the normalization $\widetilde{B}$. Since the first $d-1$ parameters $f_1, \cdots, f_{d-1}$ are selected in a fairly generic fashion, one can ask whether there is a Bertini-type result that would allow one to choose the $f_i$ carefully enough so that the one-dimensional ring $B$ has the desired properties. If there were, we would be able to relax the requirement that $A$ have algebraically closed residue field as long as $\dim A \geq 2$.
\end{rem}

\subsection{The Mixed-Characteristic Case}
If $(A,\fm,k)$ is a complete, local domain of mixed-characteristic, one could ask for an analogue of Theorem \ref{strong_cg} where $R$ is a power-series over a DVR. In the presence of an additional regularity assumption, we have the following result:

\begin{thm}\label{strong_cg_mc}Let $(A,\fm,k)$ be a complete, local domain of mixed-characteristic having dimension $d > 0$ and whose residue field is algebraically closed. Let $q = \operatorname{char}(k)$ and assume $A/qA$ is generically reduced. Fix a prime number $p > 0$. Then
\begin{enumerate}
\item[(a)] There exists a parameter ideal $I = (q, f_1, \cdots, f_{d-1})$ such that $e_I(A)$ is relatively prime to $p$.
\item[(b)] There exists a regular subring $R = V[[X_1, \cdots, X_{d-1}]]$ such that
	\begin{enumerate}
	\item[(1)] $(V, q V, k)$ is a complete DVR with uniformizer $q$.
	\item[(2)] $R \to A$ is finite.
	\item[(3)] The generic degree $[K(A):K(R)]$ is relatively prime to $p$.
	\end{enumerate}
\end{enumerate}
\end{thm}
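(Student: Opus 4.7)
\emph{Plan.} Both parts of the theorem should follow by adapting the argument of Theorem \ref{gabber_mult}, using $q$ as a prescribed first parameter and reducing to the $(d-1)$-dimensional, equicharacteristic ring $C := (A/qA)/\fN$ (where $\fN$ denotes the nilradical of $A/qA$), to which Theorem \ref{gabber_mult} applies directly.

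For part (a), observe that since $A$ is a domain, $q$ is a non-zerodivisor, and since $A$ is catenary, $A/qA$ is equidimensional of dimension $d-1$. The generic-reducedness hypothesis forces $\dim \fN < d-1$, so $C$ is a reduced, equidimensional, complete local ring of dimension $d-1$ with algebraically closed residue field $k$; moreover, because $q$ acts as zero on $A/qA$, every minimal prime of $C$ gives a residue field of characteristic $q$. Theorem \ref{gabber_mult} therefore yields a system of parameters $\bar f_1, \ldots, \bar f_{d-1} \in C$ with $e_{(\bar f_1, \ldots, \bar f_{d-1})}(C)$ coprime to $p$. Lift these to $f_i \in A$; since $\fN$ is nilpotent, the images of the $f_i$ in $A/qA$ remain a system of parameters, so $I = (q, f_1, \ldots, f_{d-1})$ is a parameter ideal of $A$. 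Setting $\bof = (f_1, \ldots, f_{d-1})$ and applying Corollary \ref{serre2} to the splitting $I = (q) + \bof$, one expresses $e_I(A)$ as $\ell \circ \Phi_{\bof} \circ \Phi_{(q)}([A])$. Since $q$ is $A$-regular, $\Phi_{(q)}([A]) = [A/qA] = [C] + [\fN]$ in $\GG_0(V(q))$, and the dimensional vanishing $e_{\bof}(\fN, d-1) = 0$ produces
\[
e_I(A) = e_{\bof}(C, d-1) + e_{\bof}(\fN, d-1) = e_{\bof}(C),
\]
which is coprime to $p$ by construction.

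For part (b), because $A$ is a mixed-characteristic domain, it is $q$-torsion free, and the algebraically closed residue field $k$ is perfect; the ``unramified'' version of the Cohen structure theorem (cf.\ \cite[29.2]{Matsumura}) therefore provides a complete DVR $V \subseteq A$ of characteristic zero with uniformizer $q$ and residue field $k$. With the $f_i$ from part (a) in hand, define
\[
R = V[[X_1, \ldots, X_{d-1}]] \longrightarrow A, \qquad X_i \mapsto f_i.
\]
The image of $\fm_R$ is the $\fm$-primary ideal $I$, so $R \to A$ is module-finite by the standard completeness argument and, for dimensional reasons together with $A$ being a domain, injective. Since $R$ and $A$ share the residue field $k$, the formula \cite[VIII.10, Cor.\ 2]{ZSII} yields $[K(A):K(R)] = e_I(A)$, which is coprime to $p$ by (a). The only wrinkle beyond the proof of Theorem \ref{gabber_mult} is tracking the nilradical $\fN$ of $A/qA$; generic reducedness is precisely what keeps $\fN$ in dimension $< d-1$ and hence invisible in the multiplicity computation. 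A secondary point that warrants care is locating the correct refinement of the Cohen structure theorem guaranteeing that $V$ can be chosen unramified, i.e.\ with $q$ itself (rather than some root of a unit multiple of $q$) as uniformizer.
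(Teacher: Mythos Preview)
Your proof is correct and follows essentially the same route as the paper: reduce modulo $q$, apply Theorem~\ref{gabber_mult} to $C=(A/qA)_{\red}$, and use Corollary~\ref{serre2} together with $\dim\fN < d-1$ to identify $e_I(A)$ with $e_{\bof}(C)$; part (b) is then deduced from (a) via the Cohen structure theorem exactly as you indicate. The only points the paper handles that you gloss over are the case $d=1$ (Theorem~\ref{gabber_mult} requires positive dimension, so your appeal to it is not literally valid there; the paper observes that generic reducedness of the zero-dimensional ring $A/qA$ forces $A/qA=k$, so $A$ is already a DVR with uniformizer $q$) and the reference for the degree formula in (b), where the paper cites \cite[11.2.6]{HunekeSwanson} rather than \cite[VIII.10, Cor.~2]{ZSII}.
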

\begin{proof}We begin by remarking that if $d=1$, then the assumption that $A/qA$ is generically reduced implies that $A/qA$ is a field. Thus, $A$ is a DVR with uniformizer $q$, thereby making statements (a) and (b) trivialities. We shall henceforth assume that $d > 1$.

(a) Since $A$ is a domain, $\Phi_{(q)}([A]) = [A/q A]$ in $\GG_0(V(q))$. Let $\fN$ be the nilradical of $A/q A$ and put $C = (A/q A)/\fN$. Since $A/q A$ is generically-reduced, $\fN_{\fq} = 0$ for all minimal primes of $A/q A$, meaning that $\dim(\fN) < \dim(A/q A) = d-1$. As $C$ is reduced and equidimensional, Theorem \ref{gabber_mult} gives us $f_1, \cdots, f_{d-1} \in A$ which form a parameter system for $A/q A$ and $e_{(f_1, \cdots, f_{d-1})}(C)$ is relatively prime to $p$.

Put $\bof = (f_1, \cdots, f_{d-1})$ and let $I = (q) + \bof$. It is clear that $I$ is a parameter ideal for $A$; it will suffice to prove that $e_I(A) = e_{\bof}(C)$. From the exact sequence
\[ 0 \to \fN \to A/q A \to C \to 0 \]
we have that $e_{\bof}(A/q A,d-1) = e_{\bof}(C,d-1) = e_{\bof}(C)$ as $\dim(\fN) < d-1$. Now, by  Corollary \ref{serre2}, it follows that
\[
e_I(A) = \ell \circ \Phi_{\bof} \circ \Phi_{(q)}([A]) = \ell \circ \Phi_{\bof}([A/q A]) = e_{\bof}(A/qA, d-1) = e_{\bof}(C). \] 

For (b), we begin by noting that we are guaranteed such a DVR $V \subset A$ by the Cohen structure theorem (see, for example, \cite[29.3]{Matsumura}). By part (a), we can find a parameter ideal $I = (q, f_1, \cdots, f_{d-1}) \subset A$ such that $e_I(A)$ is relatively prime to $p$. The map $R = V[[X_1, \cdots, X_{d-1}]] \to A$ is defined by sending the $X_i$ to $f_i$. As in the proof of Theorem \ref{strong_cg}, this map is finite and injective. By \cite[11.2.6]{HunekeSwanson}, we have $e_I(A) = [K(A):K(R)]$.
\end{proof}

To illustrate why it is necessary to require that $A/qA$ be generically reduced, we present the following example:

\begin{exmp}Let $q \in \mathbb{Z}$ be a prime and let $(V, qV, k)$ be a DVR with algebraically closed residue field $k$. Let $n > 0$ and put $A = V[[X,Y]]/(qX-Y^n)$. We claim that for any parameter ideal of the form $I = (q,f)$, $e_I(A)$ will be divisible by $n$. Consequently, if $R \subset A$ is finite with $R$ a power-series over a complete DVR, then $[K(A):K(R)]$ will be divisible by $n$.

First note that $A/qA = k[[X,Y]]/(Y^n)$, so the $A/qA$-module isomorphism $(Y^i)/(Y^{i+1}) \cong k[[X,Y]]/(Y) \cong A/(q,Y)$ and the exact sequence
\[ 0 \to (Y^i)/(Y^{i+1}) \to k[[X,Y]]/(Y^{i+1}) \to k[[X,Y]]/(Y^i) \to 0 \]
together show that $[A/qA] = n[A/(q,Y)]$ in $\GG_0(V(q))$. By Corollary \ref{serre2},
\[e_I(A) = \ell \circ \Phi_{(f)} \circ \Phi_{(q)}([A]) = \ell \circ \Phi_{(f)}([A/qA]) = n \cdot \ell \circ \Phi_{(f)}([A/(q,Y)]). \]
\end{exmp}

\section*{Acknowledgments}
I originally proposed this as one of several open problems at a 2016 Summer REU that I jointly organized with K. Tucker and W. Zhang at UIC. I would like to thank the students for their spirit and enthusiasm. In particular, Example \ref{bad} is based on an observation of J. Mundinger which showed that any automorphism of $\mathbb{F}_2[[X,Y]]$ will take $X^2 + XY + Y^2$ to itself (mod $\fm^3$). I would also like to thank the anonymous referee for their thoughtful comments and for catching a few mistakes in the original draft. The final section, concerning generalizations to the case of mixed-characteristic, was also added in response to a question raised by the referee.
This work was partially supported by an NSF RTG Grant (DMS-1246844).

\bibliographystyle{amsalpha}
\renewcommand{\bibfont}{\normalsize}
\renewcommand{\refname}{}
\bibliography{koszul}

\end{document}